\documentclass[reqno]{amsart}
\usepackage{amssymb,latexsym,amsmath,amsthm,enumerate,amsbsy}
\usepackage[mathscr]{eucal}
\usepackage{framed,color,graphicx}
\usepackage{mathrsfs}
\usepackage[all]{xy}
\usepackage{tikz}
\usepackage{cite}
\usepackage{subcaption}

\makeatletter
\@namedef{subjclassname@2020}{\textup{2020} Mathematics Subject Classification}
\makeatother

\usetikzlibrary{positioning,decorations.pathreplacing,patterns,decorations.pathmorphing}
\tikzset{%
element/.style={draw, shape=circle, fill=white, inner sep=1.4pt}
}

\DeclareSymbolFont{bbold}{U}{bbold}{m}{n}
\DeclareSymbolFontAlphabet{\mathbbold}{bbold}

\theoremstyle{plain}
\newtheorem{thm}{Theorem}[section]
\newtheorem{lem}[thm]{Lemma}
\newtheorem{cor}[thm]{Corollary}
\newtheorem{pro}[thm]{Proposition}

\newtheorem{problem}[thm]{Problem}

\newtheorem{claim}{Claim}[section]

\theoremstyle{definition}

\newcommand{\bp}{\mathbf{p}}
\newcommand{\bq}{\mathbf{q}}
\newcommand{\br}{\mathbf{r}}
\newcommand{\bs}{\mathbf{s}}
\newcommand{\bt}{\mathbf{t}}
\newcommand{\bu}{\mathbf{u}}

\newcommand{\bw}{\mathbf{w}}

\begin{document}

\title[A nonfinitely based ai-semiring of order four]
{A nonfinitely based additively idempotent semiring of order four}

\author{Mengya Yue}
\address{School of Mathematics, Northwest University, Xi'an, 710127, Shaanxi, P.R. China}
\email{myayue@yeah.net}

\author{Miaomiao Ren}
\address{School of Mathematics, Northwest University, Xi'an, 710127, Shaanxi, P.R. China}
\email{miaomiaoren@yeah.net}


\subjclass[2020]{16Y60, 03C05, 08B05}
\keywords{semiring, variety, nonfinitely based}

\begin{abstract}
We first establish a sufficient condition for an additively idempotent semiring to be nonfinitely based.
As applications, we exhibit several examples of additively idempotent semirings satisfying this condition,
including a $4$-element semiring $S_{(4,124)}$ whose additive reduct has two minimal elements and two coatoms.
Consequently, these semirings have no finite basis for their identities.
\end{abstract}

\maketitle

\section{Introduction}
By an \emph{additively idempotent semiring} (or ai-semiring for short) we mean
an algebra $(S, +, \cdot)$ such that
\begin{itemize}
\item the additive reduct $(S, +)$ is a commutative idempotent semigroup;

\item the multiplicative reduct $(S, \cdot)$ is a semigroup;

\item the distributive laws hold:
\[
x(y+z)\approx xy+xz, \quad (x+y)z\approx xz+yz.
\]
\end{itemize}
A \emph{commutative ai-semiring} is an ai-semiring whose multiplicative reduct is commutative.

Additively idempotent semirings arise naturally in several areas where classical algebraic methods encounter limitations.
They form the algebraic foundation of tropical geometry~\cite{ms} and provide a natural setting for the idempotent superposition principle, which linearizes certain nonlinear problems in optimization, idempotent linear algebra, and interval analysis~\cite{litvinov2011universal}.
Moreover, they have found applications in theoretical computer science, for instance in modal and game semirings for program verification~\cite{moller2013}.

On an ai-semiring $S$, one defines a binary relation $\leq$ by
\[
a \leq b \Leftrightarrow a + b = b.
\]
This relation is a partial order and is compatible with both addition and multiplication;
hence ai-semirings are sometimes called \emph{semilattice-ordered semigroups} \cite{kp}.
Unless stated otherwise, any order-theoretic statement concerning an ai-semiring refers to this order.

A \emph{variety} of ai-semirings is a class of ai-semirings
closed under taking subalgebras, homomorphic images, and arbitrary direct products.
Birkhoff's celebrated theorem states that a class of ai-semirings is a variety if and only if it is an equational class;
that is, the class of all ai-semirings that satisfy a certain set of identities.
A variety of ai-semirings is \emph{finitely based} if it admits a finite equational basis
(i.e., it can be defined by a finite set of identities);
otherwise, it is \emph{nonfinitely based}.
An ai-semiring $S$ is finitely based (resp., nonfinitely based)
if the variety $\mathsf{V}(S)$ it generates is finitely based (resp., nonfinitely based).

The \emph{finite basis problem} for a class of ai-semirings concerns the classification of
its members according to whether they are finitely based.
Over the past two decades, this problem for ai-semirings of small order has been intensively studied,
and considerable progress has been made
(see~\cite{do, dolinka, dgv25, gmrz, gpz, gv2501, jrz, pas05, rjzl, rlzc, rlyc, shap23, sr, Volkov, wrz, yrg, yrzs, zrc}).

In particular, Dolinka~\cite{do} found the first example of a nonfinitely based ai-semiring,
which has exactly $7$ elements.
Volkov~\cite{Volkov} proved that the ai-semiring whose multiplicative reduct is a
$6$-element Brandt monoid has no finite equational basis.
Subsequently,
Gusev and Volkov~\cite{gv2501} provided another $6$-element nonfinitely based ai-semiring.

Shao and Ren~\cite{sr} demonstrated that every variety generated by ai-semirings of order two is finitely based.
Up to isomorphism, there are exactly $61$ ai-semirings of order three, denoted by $S_{i}$ for $1 \leq i \leq 61$.
Zhao et al.~\cite{zrc} established that all of them are finitely based,
with the possible exception of $S_7$ (whose Cayley tables are provided in Table~\ref{tb24111401}).
Jackson et al.~\cite{jrz} later confirmed that $S_7$ itself is nonfinitely based.

\begin{table}[ht]
\caption{The Cayley tables of $S_7$} \label{tb24111401}
\begin{tabular}{c|ccc}
$+$      &$0$&$a$&$1$\\
\hline
$0$      &$0$&$0$&$0$\\
$a$      &$0$&$a$&$0$\\
$1$      &$0$&$0$&$1$\\
\end{tabular}\qquad
\begin{tabular}{c|ccc}
$\cdot$      &$0$&$a$&$1$\\
\hline
$0$      &$0$&$0$&$0$\\
$a$      &$0$&$0$&$a$\\
$1$      &$0$&$a$&$1$\\
\end{tabular}
\end{table}

Moreover, Jackson et al.~\cite{jrz}, Gusev and Volkov~\cite{gv2301, gv2302, gv2501}, and Gao et al.~\cite{gmrz, gjrz2}
showed that the nonfinite basis property of $S_7$ can be transferred to many finite ai-semirings whose varieties contain it.
This observation led Gao et al.~\cite{gjrz2} to propose the following problem:
\begin{problem}\label{prob0127}
Is every finite ai-semiring whose variety contains $S_7$ nonfinitely based$?$
\end{problem}

Up to isomorphism, there are exactly $866$ ai-semirings of order four, denoted by $S_{(4, i)}$ for $1 \leq i \leq 866$.
These algebras are classified into five distinct types based on their additive orders, as illustrated in \cite[Figure 1]{rlzc}.
The finite basis problem for the first three types has been resolved by Gao et al.~\cite{gmrz}, Ren et al.~\cite{rjzl, rlyc, rlzc}, Shaprynski\v{\i}~\cite{shap23}, Wu et al.~\cite{wrz}, and Yue et al.~\cite{yrzs}.
The finite basis problem for the fourth type has been settled.
The results will be presented in a comprehensive paper by Ren et al.~\cite{ryy}, which systematically summarizes the entire type.
Additionally, Yue et al.~\cite{yrg} solved two algebras within this type;
the varieties generated by these two algebras both contain $S_7$, and they were shown to be nonfinitely based,
thereby providing further evidence for Problem~\ref{prob0127}.

Recently, we have begun investigating the last remaining type of four-element ai-semirings,
namely the class $\{S_{(4,i)} \mid 59 \leq i \leq 275\}$, which consists of $217$ algebras
whose additive reducts have two minimal elements and two coatoms.
Up to now, four of these $217$ semirings remain unresolved.
Among these four, only $S_{(4,124)}$ (whose Cayley tables are presented in Tables~\ref{tb124})
generates a variety containing $S_7$, making it an interesting and worthy candidate for independent study.

The present paper focuses on the finite basis problem for $S_{(4, 124)}$.
Observe that the additive reduct of $S_{(4, 124)}$ has two minimal elements $3$, $4$ and two coatoms $2$, $4$.
We shall demonstrate that the variety $\mathsf{V}(S_{(4,124)})$ also contains $S_7$,
together with $S_2$ and $S_{53}$ (see Tables~\ref{tbS2} and~\ref{tb53} for the Cayley tables of the latter two).
Even though $S_{(4,124)}$ is only a $4$-element commutative ai-semiring and thus appears deceptively simple,
none of the known sufficient conditions for nonfinite basis in the literature apply to it.

We first develop a sufficient condition for an ai-semiring to be nonfinitely based,
and then apply it to solve the finite basis problem for $S_{(4, 124)}$,
which constitutes a contribution to Problem~\ref{prob0127}.

\begin{table}[ht]
\caption{Cayley tables of $S_{(4,124)}$, $S_2$, and $S_{53}$}
\centering
\subcaptionbox{$S_{(4,124)}$\label{tb124}}{
\begin{tabular}{c|cccc}
$+$ & $1$ & $2$ & $3$ & $4$ \\
\hline
$1$ & $1$ & $1$ & $1$ & $1$ \\
$2$ & $1$ & $2$ & $2$ & $1$ \\
$3$ & $1$ & $2$ & $3$ & $1$ \\
$4$ & $1$ & $1$ & $1$ & $4$ \\
\end{tabular}\qquad
\begin{tabular}{c|cccc}
$\cdot$ & $1$ & $2$ & $3$ & $4$ \\
\hline
$1$ & $1$ & $1$ & $1$ & $1$ \\
$2$ & $1$ & $1$ & $2$ & $1$ \\
$3$ & $1$ & $2$ & $3$ & $4$ \\
$4$ & $1$ & $1$ & $4$ & $2$ \\
\end{tabular}}\\
\subcaptionbox{$S_2$\label{tbS2}}{
\begin{tabular}{c|ccc}
$+$ & $1$ & $2$ & $3$ \\
\hline
$1$ & $1$ & $1$ & $1$ \\
$2$ & $1$ & $2$ & $1$ \\
$3$ & $1$ & $1$ & $3$ \\
\end{tabular}\qquad
\begin{tabular}{c|ccc}
$\cdot$ & $1$ & $2$ & $3$ \\
\hline
$1$ & $1$ & $1$ & $1$ \\
$2$ & $1$ & $1$ & $1$ \\
$3$ & $1$ & $1$ & $2$ \\
\end{tabular}}
\qquad
\subcaptionbox{$S_{53}$\label{tb53}}{
\begin{tabular}{c|ccc}
$+$ & $1$ & $2$ & $3$ \\
\hline
$1$ & $1$ & $1$ & $3$ \\
$2$ & $1$ & $2$ & $3$ \\
$3$ & $3$ & $3$ & $3$ \\
\end{tabular}\qquad
\begin{tabular}{c|ccc}
$\cdot$ & $1$ & $2$ & $3$ \\
\hline
$1$ & $3$ & $1$ & $3$ \\
$2$ & $1$ & $2$ & $3$ \\
$3$ & $3$ & $3$ & $3$ \\
\end{tabular}}
\end{table}

\section{Preliminaries}
In this section, we introduce the basic terminology, notation, and preliminary results that will be used throughout the paper.
All definitions and results are stated for general (not necessarily commutative) ai-semirings.
When restricted to the commutative case, some of these notions simplify; we will not repeat these routine adjustments.

Let $X$ be a countably infinite set of variables, and let $X^+$ denote the free semigroup over $X$.
An \textit{ai-semiring term} (or simply a \textit{term}) over $X$ is defined as a finite nonempty set of words in $X^+$.
(In the sequel, terms are denoted by bold lowercase letters $\mathbf{u}, \mathbf{v}, \mathbf{w}, \dots$,
while ordinary lowercase letters $x, y, z, \dots$ represent variables.)
A term is represented as a formal sum of its elements.
Specifically, $\mathbf{w} = \mathbf{u}_1 + \mathbf{u}_2 + \cdots + \mathbf{u}_n$
indicates that $\mathbf{w} = \{ \mathbf{u}_1, \mathbf{u}_2, \dots, \mathbf{u}_n \}$.
The order of the summands in the formal sum is irrelevant,
and multiple occurrences of the same word are collapsed into a single occurrence.
Two terms are equal if and only if their underlying sets coincide.

Let $P_f(X^+)$ denote the collection of all terms over $X$.
This set forms an ai-semiring under the usual operations of term addition and multiplication.
It follows from ~\cite[Theorem 2.5]{kp} that
$P_f(X^+)$ is free over $X$ in the variety of all ai-semirings.
An \textit{ai-semiring substitution} (or just a \textit{substitution})
is defined as a semiring homomorphism from $P_f(X^+)$ to itself.

Let $\mathbf{u}$ and $\mathbf{v}$ be terms.
We say that $\mathbf{u}$ is a \emph{subterm of $\mathbf{v}$} if there exist terms $\mathbf{p}_1$ and $\mathbf{p}_2$ such that
\[
\mathbf{v}=\mathbf{p}_1\mathbf{u}\mathbf{p}_2+\mathbf{p}_3,
\]
where $\mathbf{p}_1$ and $\mathbf{p}_2$ may be the empty word, and $\mathbf{p}_3$ may be the empty set.

An \emph{ai-semiring identity} (or simply an \emph{identity})
is a formal expression of the form $\mathbf{u} \approx \mathbf{v}$, where $\mathbf{u}$ and $\mathbf{v}$ are terms.
Let $S$ be an ai-semiring and $\mathbf{u} \approx \mathbf{v}$ an identity.
We say that $S$ \emph{satisfies $\mathbf{u} \approx \mathbf{v}$}, or that $\mathbf{u} \approx \mathbf{v}$ \emph{holds in $S$},
if $\varphi(\mathbf{u}) = \varphi(\mathbf{v})$ for every semiring homomorphism $\varphi: P_f(X^+) \to S$.

The following result, which is a consequence of \cite[Lemma 3.1]{dolinka},
concerns the equational logic of ai-semirings.

\begin{lem}\label{nlemma1}
Let $\Sigma$ be a set of identities and let $\mathbf{u} \approx \mathbf{v}$ be a nontrivial identity.
Then $\mathbf{u} \approx \mathbf{v}$ is derivable from $\Sigma$ if and only if there exist terms
$\mathbf{t}_1, \mathbf{t}_2, \ldots, \mathbf{t}_n \in P_f(X^+)$ such that $\mathbf{u} = \mathbf{t}_1$,
$\mathbf{v} = \mathbf{t}_n$ and, for each $1 \leq i < n$, there are terms
$\mathbf{p}_i, \mathbf{q}_i, \mathbf{r}_i, \mathbf{s}_i, \mathbf{s}'_i \in P_f(X^+)$ and a substitution
$\varphi_i\colon P_f(X^+) \to P_f(X^+)$ such that
\[
        \mathbf{t}_i = \mathbf{p}_i\varphi_i(\mathbf{s}_i)\mathbf{q}_i  + \mathbf{r}_i, \quad
        \mathbf{t}_{i+1} = \mathbf{p}_i\varphi_i(\mathbf{s}'_i)\mathbf{q}_i + \mathbf{r}_i,
\]
where $\mathbf{s}_i \approx \mathbf{s}'_i \in \Sigma$ or $\mathbf{s}'_i \approx \mathbf{s}_i \in \Sigma$,
$\mathbf{p}_i$ and $\mathbf{q}_i$ may be the empty word,
and $\mathbf{r}_i$ may be the empty set.
\end{lem}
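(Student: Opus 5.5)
The plan is to prove Lemma~\ref{nlemma1} by identifying derivability from $\Sigma$ with membership in the fully invariant congruence on $P_f(X^+)$ generated by $\Sigma$, and then describing that congruence explicitly. By Birkhoff's completeness theorem, $\mathbf{u}\approx\mathbf{v}$ is derivable from $\Sigma$ exactly when $(\mathbf{u},\mathbf{v})$ lies in the smallest fully invariant congruence $\theta_\Sigma$ on the free algebra $P_f(X^+)$ containing $\Sigma$. Here $P_f(X^+)$ is free in the variety of all ai-semirings by \cite[Theorem 2.5]{kp}, so the ai-semiring axioms themselves are built into the notion of term. Let $\rho$ be the relation defined by the chains in the statement: $(\mathbf{u},\mathbf{v})\in\rho$ iff there is a sequence of elementary steps
\[
\mathbf{t}_i=\mathbf{p}_i\varphi_i(\mathbf{s}_i)\mathbf{q}_i+\mathbf{r}_i \ \longrightarrow\ \mathbf{t}_{i+1}=\mathbf{p}_i\varphi_i(\mathbf{s}'_i)\mathbf{q}_i+\mathbf{r}_i .
\]
It then suffices to show $\rho\cup\Delta=\theta_\Sigma$.

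\emph{Sufficiency.} Each elementary step is derivable. Indeed, $\varphi_i(\mathbf{s}_i)\approx\varphi_i(\mathbf{s}'_i)$ follows from $\mathbf{s}_i\approx\mathbf{s}'_i$ by substitution. Multiplying on the left by $\mathbf{p}_i$, on the right by $\mathbf{q}_i$, and adding $\mathbf{r}_i$ are compatible operations, so the step is derivable. Transitivity then handles the whole chain.

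\emph{Necessity.} We check that $\rho\cup\Delta$ contains $\Sigma$ and is a fully invariant congruence.
\begin{itemize}
\item It contains $\Sigma$: take one step with trivial context and $\varphi$ the identity.
\item It is reflexive, symmetric and transitive: reflexivity comes from adjoining $\Delta$, symmetry from reversing the chain (allowed because either orientation of the identity may be used), and transitivity from concatenating chains.
\item It is fully invariant: for a substitution $\psi$, apply $\psi$ to each step. Since $\psi$ is a semiring homomorphism, $\psi(\mathbf{t}_i)=\psi(\mathbf{p}_i)(\psi\circ\varphi_i)(\mathbf{s}_i)\psi(\mathbf{q}_i)+\psi(\mathbf{r}_i)$, which is again an elementary step. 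Empty $\mathbf{p}_i,\mathbf{q}_i,\mathbf{r}_i$ stay empty.
\item It is compatible with multiplication: multiplying a step on the left by a term $\mathbf{w}$ gives $\mathbf{w}\mathbf{p}_i\varphi_i(\mathbf{s}_i)\mathbf{q}_i+\mathbf{w}\mathbf{r}_i$ by distributivity, and similarly on the right. This is again of the required shape.
\item It is compatible with addition: adding $\mathbf{w}$ replaces $\mathbf{r}_i$ by $\mathbf{r}_i+\mathbf{w}$.
\end{itemize}
Compatibility for the second argument of each operation is symmetric, and compatibility in both arguments simultaneously follows by changing one argument at a time and using transitivity.

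The main point requiring care is the addition step and the use of formal sums as sets. A term $\mathbf{t}_i$ may admit several such decompositions, and $\mathbf{r}_i$ may overlap with $\mathbf{p}_i\varphi_i(\mathbf{s}_i)\mathbf{q}_i$. This causes no trouble, because we only need existence of some decomposition and set union is idempotent. The one subtle case is when $\mathbf{r}_i+\mathbf{w}$ would have to be "empty": this cannot happen, since $\mathbf{w}$ is a nonempty term.

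The hypothesis that $\mathbf{u}\approx\mathbf{v}$ is nontrivial ensures that $\mathbf{u}\neq\mathbf{v}$, so the $\Delta$ part is not needed and a chain with $n\ge2$ exists. All of this is essentially \cite[Lemma 3.1]{dolinka}, and I would cite it for the general form and verify only the translation above.
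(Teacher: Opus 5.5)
Your proposal is correct and is essentially the paper's approach. The paper gives no argument of its own and simply derives the lemma from Dolinka's Lemma 3.1, while you supply the standard verification behind that result: the chain relation together with the diagonal is the fully invariant congruence on $P_f(X^+)$ generated by $\Sigma$, and nontriviality of $\mathbf{u}\approx\mathbf{v}$ rules out the diagonal case. Because the statement allows $\mathbf{p}_i,\mathbf{q}_i$ to be arbitrary terms rather than words, your one-step treatment of multiplication by a term $\mathbf{w}$ (new context $\mathbf{w}\mathbf{p}_i$, new remainder $\mathbf{w}\mathbf{r}_i$) is legitimate.
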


We denote by $\mathbf{u} \preceq \mathbf{v}$ (or equivalently $\mathbf{v} \succeq \mathbf{u}$)
the identity $\mathbf{v}\approx \mathbf{v} + \mathbf{u}$,
which we refer to as an \textit{ai-semiring inequality} (or simply an \textit{inequality}).
It is routine to verify that an ai-semiring $S$ satisfies the inequality $\mathbf{u} \preceq \mathbf{v}$
if and only if for every semiring homomorphism $\varphi: P_f(X^+) \to S$,
we have $\varphi(\mathbf{u}) \leq \varphi(\mathbf{v})$.
Consequently, $S$ satisfies an identity $\mathbf{u} \approx \mathbf{v}$ precisely when it satisfies
both inequalities $\mathbf{u} \preceq \mathbf{v}$ and $\mathbf{v} \preceq \mathbf{u}$.
Therefore, $\mathbf{u} \approx \mathbf{v}$ is equivalent to
the inequalities $\mathbf{u} \preceq \mathbf{v}$ and $\mathbf{v} \preceq \mathbf{u}$.
For this reason, when dealing with a set of identities,
we may always assume without loss of generality that it consists of inequalities.
The following discussion shows that it suffices to consider inequalities of a special form.

Now let $\Sigma$ be a set of identities, and let $\mathbf{u} \approx \mathbf{v}$ be an identity such that
\[
\mathbf{u} = \mathbf{u}_1 + \cdots + \mathbf{u}_k, \quad
\mathbf{v} = \mathbf{v}_1 + \cdots + \mathbf{v}_\ell,
\]
where $\mathbf{u}_i, \mathbf{v}_j \in X^+$ for $1 \leq i \leq k$ and $1 \leq j \leq \ell$.
One readily checks that the ai-semiring variety defined by $\mathbf{u} \approx \mathbf{v}$
coincides with the ai-semiring variety defined by the inequalities
\[
\mathbf{u}_i \preceq \mathbf{v}, \quad \mathbf{v}_j \preceq \mathbf{u} \quad (1 \leq i \leq k, \, 1 \leq j \leq \ell).
\]
Consequently, to prove that $\mathbf{u} \approx \mathbf{v}$ is derivable from $\Sigma$,
it suffices to show that for every $i$ and $j$,
the inequalities $\mathbf{u}_i \preceq \mathbf{v}, \, \mathbf{v}_j \preceq \mathbf{u}$ can be derived from $\Sigma$.
In view of this, we always restrict our attention to the inequalities of the form
$\mathbf{q} \preceq \mathbf{u}$, where $\mathbf{q}$ is a word and $\mathbf{u}$ is a term.

To end this section, we introduce some notation.
Let $\bw$ be a nonempty word in $X^+$, and let $x$ be a variable in $X$. Then
\begin{itemize}
\item $c(\bw)$ denotes the set of all variables occurring in $\bw$;

\item $\ell(\bw)$ denotes the number of variables occurring in $\bw$ counting multiplicities;

\item $S_2(\bw)$ denotes the set of all subwords $\bp$ of $\bw$ with $\ell(\bp)=2$;

\item $occ(x, \bw)$ denotes the number of occurrences of $x$ in $\bw$.
\end{itemize}

Now let $\bu$ be a term such that $\bu=\bu_1+\bu_2+\cdots+\bu_n$,
where $\bu_i \in X^+$, $1 \leq i \leq n$.
Let $k$ be a positive integer. Then
\begin{itemize}
\item $c(\bu)$ denotes the set $\bigcup_{i=1}^n c(\bu_i)$;

\item $L_{ k}(\bu)$ denotes the set $\{\bu_i \in \bu \mid \ell(\bu_i)= k\}$;

\item $S_2(\bu)$ denotes the set $\bigcup_{i=1}^n S_2(\bu_i)$;

\item $\delta(\bu)$ denotes the set of subsets $Z$ of $c(\bu)$ such that for every
 $\bu_i\in\bu$, $Z\cap c(\bu_i)$ is a singleton and $occ(x,\bu_i)=1$ if $\{x\}=Z\cap c(\bu_i)$.
\end{itemize}

\section{The semiring $S_{(4, 124)}$}
In this section, we use a syntactic approach to show that the ai-semiring $S_{(4, 124)}$ is nonfinitely based.

The following three lemmas provide solutions to the equational problems for $S_2$, $S_7$, and $S_{53}$, respectively.
They are due to Ren et al.~\cite[Lemma 4.1]{rlzc}, Jackson et al.~\cite[Proposition 5.5]{jrz}, and Yue et al.~\cite[Lemma 3.6]{yrg}.

\begin{lem}\label{lem201}
Let $\bq\preceq \bu$ be an inequality such that
$\bu=\bu_1+\bu_2+\cdots+\bu_n$ with $\bu_i, \bq \in X^+$ for $1\leq i \leq n$.
Then $\bq\preceq \bu$ holds in $S_2$ if and only if one of the following conditions is satisfied:
\begin{itemize}
\item[$(1)$] $\ell(\bu_i)\geq 3$ for some $\bu_i \in \bu$;

\item[$(2)$] $c(L_1(\bu))\cap c(L_2(\bu)) \neq \emptyset$;

\item[$(3)$] $\ell(\bu_i)\leq 2$ for all $\bu_i \in \bu$, and $c(L_1(\bu))\cap c(L_2(\bu))=\emptyset$.
In this case, $\ell(\mathbf{q}) \leq 2$.
Moreover, if $\ell(\mathbf{q}) = 1$, then $\mathbf{q} \preceq \mathbf{u}$ is trivial;
if $\ell(\mathbf{q}) = 2$, then $c(\mathbf{q}) \subseteq c(L_2(\mathbf{u}))$.
\end{itemize}
\end{lem}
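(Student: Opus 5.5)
The plan is to compute directly in $S_2$ and treat the conditions as a case split. The statement reads as follows: if (1) or (2) holds, then $\bq\preceq\bu$ holds in $S_2$. If both fail, then (3) is in force, and $\bq\preceq\bu$ holds exactly when the stated extra conditions on $\bq$ are met. The basic observation comes from the tables. In $S_2$ the element $1$ is the top of the order, since it absorbs under addition. The elements $2$ and $3$ are incomparable, and their sum is $1$. Every product is $1$ except $3\cdot 3=2$. So for a semiring homomorphism $\varphi$ and a word $\bw$:
\begin{itemize}
\item if $\ell(\bw)\ge 3$, then $\varphi(\bw)=1$;
\item if $\ell(\bw)=2$, then $\varphi(\bw)=2$ when both letters are sent to $3$, and $\varphi(\bw)=1$ otherwise;
\item if $\ell(\bw)=1$, then $\varphi(\bw)$ is just the value of the letter.
\end{itemize}
By the remark in the preliminaries, $\bq\preceq\bu$ holds exactly when $\varphi(\bq)\le\varphi(\bu)$ for all $\varphi$.

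\textbf{Sufficiency of (1) and (2).} Under (1), some summand evaluates to $1$, so $\varphi(\bu)=1$ for every $\varphi$, and the inequality holds. Under (2), pick $x\in c(L_1(\bu))\cap c(L_2(\bu))$ and a word $\bu_j\in L_2(\bu)$ containing $x$. If $\varphi(x)\ne 3$, then $\varphi(\bu_j)=1$. If $\varphi(x)=3$, then $\bu$ has the summands $x\mapsto 3$ and $\bu_j\mapsto$ a value in $\{1,2\}$, whose sum is $1$. Either way $\varphi(\bu)=1$.

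\textbf{Sufficiency within (3).} Now assume (3), so every summand has length at most $2$ and $c(L_1(\bu))\cap c(L_2(\bu))=\emptyset$. If $\ell(\bq)=1$ and the inequality is trivial, there is nothing to prove. Suppose instead $\ell(\bq)=2$ and $c(\bq)\subseteq c(L_2(\bu))$.
\begin{itemize}
\item If some variable of $c(L_2(\bu))$ is not sent to $3$, some word of $L_2(\bu)$ evaluates to $1$, so $\varphi(\bu)=1$.
\item Otherwise every word of $L_2(\bu)$ evaluates to $2$. Both letters of $\bq$ lie in $c(L_2(\bu))$, so they are sent to $3$ and $\varphi(\bq)=2$. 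Since $2$ is a summand of $\varphi(\bu)$, we get $\varphi(\bq)=2\le\varphi(\bu)$.
\end{itemize}

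\textbf{Necessity within (3).} This is the main step: choosing substitutions that exploit the disjointness $c(L_1(\bu))\cap c(L_2(\bu))=\emptyset$. The base substitution $\varphi$ sends every variable of $c(L_2(\bu))$ to $3$ and every other variable to $2$. Then each word of $L_1(\bu)$ evaluates to $2$ and each word of $L_2(\bu)$ evaluates to $3\cdot 3=2$, so $\varphi(\bu)=2$.
\begin{itemize}
\item If $\ell(\bq)\ge 3$, then $\varphi(\bq)=1\not\le 2$. Hence $\ell(\bq)\le 2$.
\item If $\bq=x$ has length $1$ and $x\notin c(L_1(\bu))$, modify $\varphi$ as follows. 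If $x\in c(L_2(\bu))$, keep $\varphi$; then $\varphi(\bq)=3\not\le 2$. If $x\notin c(\bu)$, redefine $\varphi(x)=1$; then $\varphi(\bq)=1\not\le 2$. So $x\in c(L_1(\bu))$, which means $x$ itself is a summand of $\bu$, and $\bq\preceq\bu$ is trivial.
\item If $\ell(\bq)=2$ and some letter of $\bq$ is outside $c(L_2(\bu))$, that letter is sent to $2$ by the base $\varphi$. Then $\varphi(\bq)=1\not\le 2=\varphi(\bu)$.
\end{itemize}

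In every case the relevant value of $\varphi(\bu)$ is $2$, because $\bu$ is nonempty and all its summands evaluate to $2$. This gives the required contradictions and completes the proof.
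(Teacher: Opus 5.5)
Your proof is correct and complete. The ingredients all check out:
\begin{itemize}
\item the evaluation rules for words in $S_2$: a word of length at least $3$ evaluates to the absorbing top $1$, and a word of length $2$ evaluates to $2$ exactly when both letters go to $3$;
\item the sufficiency checks for (1), (2), and for the conditions on $\bq$ in (3);
\item the single witnessing substitution, sending the variables of $c(L_2(\bu))$ to $3$ and all others to $2$, which forces $\varphi(\bu)=2$ precisely because of the disjointness hypothesis.
\end{itemize}

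The paper does not prove this lemma itself; it imports it from Ren et al.~\cite[Lemma 4.1]{rlzc}, so there is no in-paper argument to compare against. Your direct computation makes the lemma self-contained. Your reading of (3) as a hypothesis on $\bu$ plus constraints on $\bq$, with ``trivial'' meaning $\bq\in\bu$, is the right one and matches how the lemma is used in the proof of Theorem~\ref{thm1}.
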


\begin{lem}\label{lemma21}
Let $\bq\preceq \bu$ be an inequality such that
$\bu=\bu_1+\bu_2+\cdots+\bu_n$ with $\bu_i, \bq \in X^+$ for $1\leq i \leq n$.
Then $\bq\preceq\bu$ holds in $S_7$ if and only if $c(\bq)\subseteq c(\bu)$ and $\delta(\bu)\subseteq\delta(\bu+\bq)$.
\end{lem}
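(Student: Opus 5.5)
The plan is a direct semantic argument: compute how a word and a sum of words evaluate in $S_7$, and then check necessity and sufficiency separately.

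\textbf{Evaluation in $S_7$.} In $S_7$ the element $0$ is multiplicatively absorbing and is the top element, since $x+0=0$. The element $1$ is the multiplicative identity, and $a\cdot a=0$. The elements $a$ and $1$ are incomparable, with $a+1=0$.

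Consequently, for a homomorphism $\varphi\colon P_f(X^+)\to S_7$ and a word $\bw$:
\begin{itemize}
\item $\varphi(\bw)=1$ iff every variable of $\bw$ is sent to $1$;
\item $\varphi(\bw)=a$ iff no variable of $\bw$ is sent to $0$ and exactly one occurrence in $\bw$ is of a variable sent to $a$;
\item $\varphi(\bw)=0$ otherwise.
\end{itemize}
For a term $\bu=\bu_1+\cdots+\bu_n$, the value $\varphi(\bu)$ is $1$ (resp.\ $a$) iff every $\varphi(\bu_i)$ equals $1$ (resp.\ $a$); otherwise it is $0$. Since $0$ is the top, $\bq\preceq\bu$ holds in $S_7$ iff for every $\varphi$, either $\varphi(\bu)=0$ or $\varphi(\bq)=\varphi(\bu)$.

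\textbf{Necessity.} First suppose $x\in c(\bq)\setminus c(\bu)$. Send $x\mapsto 0$ and every other variable to $1$. Then $\varphi(\bu)=1$ but $\varphi(\bq)=0$, so the inequality fails.

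Next suppose $Z\in\delta(\bu)\setminus\delta(\bu+\bq)$. Send every variable in $Z$ to $a$ and every other variable to $1$. By the definition of $\delta(\bu)$, each $\bu_i$ has exactly one occurrence of a variable from $Z$, so $\varphi(\bu)=a$.

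Now $Z\subseteq c(\bu+\bq)$ and all summands of $\bu$ already meet the condition defining $\delta$. Hence $Z\notin\delta(\bu+\bq)$ means that $\bq$ fails it: either $\bq$ contains no variable of $Z$, or it contains occurrences of variables of $Z$ at least twice in total. In the first case $\varphi(\bq)=1$, and in the second $\varphi(\bq)=0$. Either way $\varphi(\bq)\neq a=\varphi(\bu)$, so the inequality fails.

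\textbf{Sufficiency.} Assume both conditions hold and fix $\varphi$. If $\varphi(\bu)=0$ there is nothing to prove.

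If $\varphi(\bu)=1$, every variable of $c(\bu)$ is sent to $1$. Since $c(\bq)\subseteq c(\bu)$, we get $\varphi(\bq)=1$.

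If $\varphi(\bu)=a$, no variable of $c(\bu)$ is sent to $0$. Let $Z$ be the set of variables in $c(\bu)$ sent to $a$. Because each $\varphi(\bu_i)=a$, each $\bu_i$ contains exactly one occurrence of a variable from $Z$, so $Z\in\delta(\bu)\subseteq\delta(\bu+\bq)$. Thus $\bq$ contains exactly one occurrence of a variable from $Z$. Every other variable of $\bq$ lies in $c(\bu)\setminus Z$ and is therefore sent to $1$. Hence $\varphi(\bq)=a$.

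\textbf{Main obstacle.} The only delicate point is the bookkeeping in the necessity direction: checking that $Z\notin\delta(\bu+\bq)$, given $Z\in\delta(\bu)$, is caused exactly by the summand $\bq$. This follows because the condition defining $\delta$ is checked summand by summand and the ground set only grows from $c(\bu)$ to $c(\bu+\bq)$.
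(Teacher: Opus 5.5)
Your proof is correct and complete. The evaluation rules you use for $S_7$ are right: $0$ is absorbing and is the additive top, $1$ is the identity, $a^2=0$, and $a+1=0$. They reduce $\bq\preceq\bu$ to the requirement that $\varphi(\bq)=\varphi(\bu)$ whenever $\varphi(\bu)\in\{1,a\}$.

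Your two test assignments handle necessity:
\begin{enumerate}
\item[(i)] sending a variable of $c(\bq)\setminus c(\bu)$ to $0$ and everything else to $1$;
\item[(ii)] sending some $Z\in\delta(\bu)\setminus\delta(\bu+\bq)$ to $a$ and everything else to $1$.
\end{enumerate}
Your observation that a failure of $Z\in\delta(\bu+\bq)$ can only come from the summand $\bq$ is the right bookkeeping point. The sufficiency case split on $\varphi(\bu)\in\{1,a\}$ is also sound.

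The paper itself gives no proof of this lemma. It imports the result from Jackson et al.\ (Proposition 5.5 there), so there is no internal argument to compare against. Your direct semantic argument makes the cited criterion self-contained. It also shows why it holds: $Z\in\delta(\bu)$ says exactly that the assignment sending $Z$ to $a$ and the rest of $c(\bu)$ to $1$ evaluates every summand of $\bu$ to $a$.
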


\begin{lem}\label{lem5301}
Let $\bq\preceq \bu$ be an inequality such that
$\bu=\bu_1+\bu_2+\cdots+\bu_n$ with $\bu_i, \bq \in X^+$ for $1\leq i \leq n$.
Then $\bq\preceq\bu$ holds in $S_{53}$
if and only if $c(\bq)\subseteq c(\bu)$, and one of the following conditions is satisfied:
\begin{itemize}
\item[$(1)$] $L_{\geq 2}(\bu)= \emptyset$. In this case, $\mathbf{q} \preceq \mathbf{u}$ is trivial;

\item[$(2)$] $L_{\geq 2}(\bu)\neq \emptyset$,
and for every $\bw\in S_2(\bq)$ there exists $\bw'\in S_2(\bu)$ such that $c(\bw')\subseteq c(\bw)$.
\end{itemize}
\end{lem}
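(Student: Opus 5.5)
We read "subword" in the definition of $S_2(\cdot)$ as a \emph{scattered} subword, that is, a subsequence of letters that need not be adjacent. The lemma is stated for this reading. With contiguous subwords the condition in $(2)$ fails to be necessary: $xy\preceq xzy$ holds in $S_{53}$, but $xzy$ has no contiguous length-2 subword with content contained in $\{x,y\}$.

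The plan is to recast $S_{53}$ in a transparent form and then evaluate terms directly. From Table~\ref{tb53}, $2$ is the multiplicative identity, $3$ is a multiplicative zero, and $1\cdot 1=3$. The additive order is $2<1<3$. Write $e=2$, $a=1$, $0=3$. Then $S_{53}=\{e,a,0\}$ with $a^2=0$ and $e<a<0$, so $0$ is the top element and absorbs both operations. Under an assignment $\varphi$, a word evaluates to $0$ if some letter is sent to $0$ or at least two letter occurrences are sent to $a$. It evaluates to $a$ if exactly one occurrence is sent to $a$ and none to $0$, and to $e$ otherwise. A sum evaluates to the maximum of its summands.

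\textbf{Necessity.} First suppose $x\in c(\bq)\setminus c(\bu)$. Send $x\mapsto 0$ and every other variable to $e$. Then $\varphi(\bq)=0>e=\varphi(\bu)$, so $c(\bq)\subseteq c(\bu)$ is forced.

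Next suppose $L_{\geq 2}(\bu)=\emptyset$, so $\bu$ is a sum of letters, and suppose $\bq$ is not one of these letters. Then $\ell(\bq)\geq 2$, because $\bq$ cannot be a single letter outside $c(\bu)$. Sending every variable to $a$ gives $\varphi(\bu)=a$ and $\varphi(\bq)=0$, a contradiction. Hence $\bq\in\bu$, i.e., the inequality is trivial.

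Finally, in case $(2)$, take $\bw\in S_2(\bq)$ with $c(\bw)=\{x,y\}$, where possibly $x=y$. Send $x,y\mapsto a$ and all other variables to $e$. Then $\varphi(\bq)=0$. Each $\bu_i$ contains at most one occurrence of letters from $\{x,y\}$, unless it has a scattered subword $\bw'$ of length $2$ with $c(\bw')\subseteq\{x,y\}$. Therefore, if no such $\bw'$ exists in $\bu$, we get $\varphi(\bu)\leq a<0$, a contradiction.

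\textbf{Sufficiency.} Fix an assignment $\varphi$ and compare $\varphi(\bq)$ with $\varphi(\bu)$ by cases.
\begin{itemize}
\item If $\varphi(\bq)=e$, there is nothing to prove, since $e$ is the bottom element.
\item If $\varphi(\bq)=a$, some $x\in c(\bq)$ is sent to $a$. Since $c(\bq)\subseteq c(\bu)$, some $\bu_i$ contains $x$, and then $\varphi(\bu_i)\in\{a,0\}$, so $\varphi(\bu_i)\geq a$.
\item If $\varphi(\bq)=0$ because a letter $x$ of $\bq$ is sent to $0$, then $x$ occurs in some $\bu_i$, which forces $\varphi(\bu)=0$.
\item Otherwise two occurrences in $\bq$, of letters $x,y$, are sent to $a$. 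They form some $\bw\in S_2(\bq)$, and by $(2)$ there is $\bw'\in S_2(\bu)$ with $c(\bw')\subseteq\{x,y\}$. Then the summand of $\bu$ containing $\bw'$ has two occurrences valued $a$, so $\varphi(\bu)=0$.
\end{itemize}
In case $(1)$ the inequality is trivial and holds automatically.

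The only delicate point is fixing the meaning of $S_2$. Once the scattered-subword reading is adopted, every step reduces to the counting of $a$'s described above, and I expect no further obstacle.
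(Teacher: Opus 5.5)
Your proof is correct. The paper does not prove this lemma; it quotes it from Yue et al., so there is no in-paper argument to compare against. Your identification of $S_{53}$ with the chain $e<a<0$ (with $e$ the identity, $0$ absorbing and $a^2=0$) reduces everything to counting letter occurrences sent to $a$, and both directions check out, including the case $x=y$ for $\bw\in S_2(\bq)$.

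Two remarks on the readings you adopt.

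First, your scattered-subword reading is the one the paper relies on when it applies the lemma in the proof of Theorem~\ref{thm1}. There it concludes from the lemma that $\bp$ is linear, and it treats $y_3y_1$ as a length-$2$ subword of $y_1y_2\cdots y_m$. In the commutative setting used there, subwords are simply sub-multisets. So your caveat, and your counterexample $xy\preceq xzy$ under the contiguous reading, are consistent with how the lemma is actually used.

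Second, in the sufficiency direction you read ``trivial'' in $(1)$ as part of the hypothesis rather than as a consequence. That reading is forced: $x^2\preceq x$ satisfies $c(\bq)\subseteq c(\bu)$ and $L_{\geq 2}(\bu)=\emptyset$, yet it fails in $S_{53}$ under $x\mapsto a$. This deserves an explicit sentence, just as you gave one for the meaning of $S_2$.
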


Recall that the \emph{distance} $d(x,y)$ between two vertices $x$ and $y$ in
a connected graph is the length of a shortest path connecting them.
A graph is \emph{bipartite} if its vertex set can be partitioned into two disjoint
sets such that no two vertices within the same set are adjacent.
By \cite[Theorem 4]{bol98}, a graph is bipartite if and only if it contains no odd cycle.
The following lemma is essentially a direct consequence of \cite[Theorem 4]{bol98};
we include a short proof for completeness.

\begin{lem}\label{graph01}
Let $G$ be a bipartite graph,
and let $H$ be a set of vertices of $G$ such that no two distinct vertices in $H$ are joined by an odd-length path in $G$.
Then $G$ has a bipartition $(Y, Z)$ with $H \subseteq Y$.
\end{lem}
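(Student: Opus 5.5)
We construct the bipartition one connected component of $G$ at a time. Bipartiteness and the parity of path lengths are both local to components, so this is enough. Throughout we use the standard fact from \cite[Theorem 4]{bol98}: since $G$ has no odd cycle, any two walks between the same pair of vertices have lengths of the same parity. (If a walk of length $m$ and a walk of length $m'$ join $u$ to $v$, concatenating one with the reverse of the other gives a closed walk of length $m+m'$. A closed walk of odd length always contains an odd cycle, so $m+m'$ is even.)

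Fix a connected component $C$ of $G$. If $C\cap H=\emptyset$, take any bipartition of $C$, which exists because $G$ is bipartite. Otherwise choose a vertex $h\in C\cap H$. Put
\[
Y_C=\{v\in C \mid d(h,v)\ \text{is even}\},\qquad Z_C=\{v\in C\mid d(h,v)\ \text{is odd}\}.
\]

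We first check that $(Y_C,Z_C)$ is a bipartition of $C$. Suppose $u$ and $v$ are adjacent. Adjoining the edge $uv$ to a shortest $h$--$u$ path gives an $h$--$v$ walk of length $d(h,u)+1$. By the parity fact, $d(h,v)\equiv d(h,u)+1 \pmod 2$. Hence $u$ and $v$ lie in different classes.

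Next we check that $C\cap H\subseteq Y_C$. Clearly $h\in Y_C$, since $d(h,h)=0$. Let $h'\in C\cap H$ with $h'\neq h$. A shortest $h$--$h'$ path has length $d(h,h')$. By hypothesis, no path between two distinct vertices of $H$ has odd length. Therefore $d(h,h')$ is even, and $h'\in Y_C$.

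Finally set $Y=\bigcup_C Y_C$ and $Z=\bigcup_C Z_C$. Since there are no edges between different components, $(Y,Z)$ is a bipartition of $G$, and $H\subseteq Y$.

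The only step needing any care is the parity consistency, that is, that $d(h,v)$ determines the parity of every $h$--$v$ walk. This is exactly where bipartiteness, in the form of having no odd cycle, is used, and it reduces to the cited theorem.
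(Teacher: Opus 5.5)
Your proof is correct and follows essentially the same route as the paper: on each component you 2-colour by the parity of the distance from a chosen vertex of $H$, and the hypothesis forces every vertex of $H$ in that component to lie at even distance. The differences are cosmetic. You take an arbitrary bipartition on components missing $H$, and you justify the edge step via the parity-of-walks fact (an odd closed walk contains an odd cycle), which is slightly more careful than the paper's direct appeal to an odd cycle.
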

\begin{proof}
Let $\{C_i\mid i \in I\}$ be the set of all connected components of $G$,
and for each $i \in I$, denote by $V(C_i)$ the vertex set of $C_i$.
We choose a vertex $v_i \in V(C_i)$ as follows:
if $H \cap V(C_i) \neq \emptyset$, then $v_i \in H \cap V(C_i)$;
otherwise, $v_i$ is an arbitrary vertex in $V(C_i)$.
Now define
\[
Y_i = \{ x \in V(C_i) \mid d(x, v_i) \text{ is even} \}, \quad
Z_i = V(C_i) \setminus Y_i.
\]

If an edge connected two vertices both in $Y_i$ or both in $Z_i$, then taking shortest paths from $v_i$ to those vertices together with that edge would yield an odd cycle (since the sum of two even distances or two odd distances is even, and adding the edge contributes one more). This contradicts the fact that $G$ contains no odd cycle. Hence no edge joins two vertices within $Y_i$ or within $Z_i$, so $(Y_i, Z_i)$ is a bipartition of $C_i$.

Let
\[
Y = \bigcup_{i \in I} Y_i, \quad Z = \bigcup_{i \in I} Z_i.
\]
Then $G$ is a bipartite graph with bipartition $(Y, Z)$.

Now take any $h \in H$. There exists a unique $i \in I$ such that $h \in V(C_i)$.
By our choice of $v_i$, the distance $d(h, v_i)$ is even (otherwise an odd-length path would connect two vertices of $H$, contradicting the hypothesis). Hence $h \in Y_i \subseteq Y$. Therefore $H \subseteq Y$, completing the proof.
\end{proof}

For an integer $n \geq 1$, we define the terms
\[
\mathbf{u}^{(n)} = x_1x_2 + x_2x_3 + \cdots + x_{2n}x_{2n+1} + x_{2n+1}x_1 + y_1y_2 + y_2y_1 + y_1,
\]
and
\[
\mathbf{q}^{(n)} = y_2.
\]
Let $\mathcal{W}$ denote the ai-semiring variety defined by the inequalities $\mathbf{q}^{(n)} \preceq \mathbf{u}^{(n)}$ for all $n \geq 1$.
Inequalities analogous to $\mathbf{q}^{(n)} \preceq \mathbf{u}^{(n)}$ have proven to be very useful;
they have played a key role in establishing the nonfinite basis property for several algebras (see, e.g.,~\cite{aj25, gmrz, jrz, wrz, yrg}).

We now verify that $S_2$, $S_7$, and $S_{53}$ belong to $\mathcal{W}$.
In each case, the verification follows directly from the description of the inequalities and the relevant lemmas.
For $S_2$, observe that $c(L_1(\mathbf{u}^{(n)})) \cap c(L_2(\mathbf{u}^{(n)})) \neq \emptyset$,
so Lemma~\ref{lem201} applies and gives $\mathbf{q}^{(n)} \preceq \mathbf{u}^{(n)}$ in $S_2$ for every $n$.
For $S_7$, we have that $c(\mathbf{q}^{(n)}) \subseteq c(\mathbf{u}^{(n)})$ and $\delta(\mathbf{u}^{(n)}) = \emptyset$,
hence Lemma~\ref{lemma21} yields the desired inequality in $S_7$.
For $S_{53}$, the conditions $c(\mathbf{q}^{(n)}) \subseteq c(\mathbf{u}^{(n)})$, $L_2(\mathbf{u}^{(n)}) \neq \emptyset$,
and $\mathbf{q}^{(n)}$ being a single variable together imply, by Lemma~\ref{lem5301},
that $\mathbf{q}^{(n)} \preceq \mathbf{u}^{(n)}$ holds in $S_{53}$.

Consequently, $\mathcal{W}$ contains $S_2$, $S_7$, and $S_{53}$.
With this verification in place, we now state and prove the main result of this section.

\begin{thm}\label{thm1}
Let $\mathcal{V}$ be an ai-semiring variety.
If $\mathcal{V}$ is a subvariety of the variety $\mathcal{W}$ and
contains $S_2$, $S_7$, and $S_{53}$, then $\mathcal{V}$ is nonfinitely based.
\end{thm}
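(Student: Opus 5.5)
The plan is the standard syntactic argument. Suppose, for a contradiction, that $\mathcal{V}$ has a finite basis $\Sigma$. By the reduction in Section~2 we may take $\Sigma$ to consist of inequalities $\mathbf{s}'\preceq\mathbf{s}$ with $\mathbf{s}'$ a word, and we let $n$ exceed the number of variables in every member of $\Sigma$. Since $\mathcal{V}\subseteq\mathcal{W}$, the inequality $\mathbf{q}^{(n)}\preceq\mathbf{u}^{(n)}$ holds in $\mathcal{V}$. Hence $\mathbf{u}^{(n)}+y_2$ is reachable from $\mathbf{u}^{(n)}$ by a chain of elementary steps as in Lemma~\ref{nlemma1}, each step using an inequality $\mathbf{s}'\preceq\mathbf{s}$ of $\mathcal{V}$ in fewer than $n$ variables. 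We will find an invariant that $\mathbf{u}^{(n)}$ has, that every such step preserves, and that $\mathbf{u}^{(n)}+y_2$ lacks. Since addition is idempotent, a step only adds a word $\mathbf{p}\varphi(\mathbf{s}')\mathbf{q}$ to a term already containing $\mathbf{p}\varphi(\mathbf{s})\mathbf{q}$ (or reverses such an addition). It therefore suffices to show that every term $\mathbf{t}$ in the chain is a subset of a fixed set $T$ of words with $y_2\notin T$. This also takes care of removals.

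The candidate for $T$ is the set of words $\mathbf{w}$ such that $\mathbf{w}\preceq\mathbf{u}^{(n)}$ holds in $S_2$, $S_7$ and $S_{53}$ together with one extra combinatorial condition that excludes the lone letter $y_2$. Concretely, this condition should say that every occurrence of $y_2$ in $\mathbf{w}$ sits next to $y_1$, or that $\ell(\mathbf{w})\ge 2$ when $c(\mathbf{w})\cap\{y_1,y_2\}\neq\emptyset$; the exact form will be chosen so that the argument below closes. Lemmas~\ref{lem201}, \ref{lemma21} and~\ref{lem5301} give explicit descriptions of the first three conditions, and those conditions are automatically preserved because $S_2,S_7,S_{53}\in\mathcal{V}$.

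The main obstacle is showing that the extra condition is preserved. Consider a step with substitution $\varphi$ and inequality $\mathbf{s}'\preceq\mathbf{s}$ in fewer than $n$ variables, where every word of $\mathbf{p}\varphi(\mathbf{s})\mathbf{q}$ lies in $T$. Suppose $\mathbf{p}\varphi(\mathbf{s}')\mathbf{q}=y_2$. Then $\mathbf{p},\mathbf{q}$ are empty and $\varphi(\mathbf{s}')=y_2$, so some variable $z$ of $\mathbf{s}'$ has $\varphi(z)\ni y_2$. Build a graph $G$ on $c(\mathbf{s})$ whose edges record those words of $\mathbf{s}$ whose $\varphi$-images contain the cycle words $x_ix_{i+1}$. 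The odd cycle $x_1\cdots x_{2n+1}x_1$ has length $2n+1$, and $|c(\mathbf{s})|<n$ means $\varphi$ cannot wind an odd cycle of $G$ around it. From this one shows $G$ has no odd cycle, so it is bipartite. Variables whose images involve $y_1$ or $y_2$ should not be joined by odd paths, so Lemma~\ref{graph01} gives a bipartition $(Y,Z)$ with these variables in $Y$.

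From the bipartition we then build an evaluation into $S_7$: send $Y$ to $1$ and $Z$ to $a$, or use the equivalent transversal $Z'\in\delta(\mathbf{s})$. Under this evaluation every word of $\mathbf{s}$ meets the transversal exactly once. The aim is to show this transversal witnesses $\delta(\mathbf{s})\not\subseteq\delta(\mathbf{s}+\mathbf{s}')$, which contradicts Lemma~\ref{lemma21}. If instead the transversal survives, the structure of $\mathbf{s}'$ is forced, and we use Lemma~\ref{lem201} (for length) and Lemma~\ref{lem5301} (for adjacent pairs) to contradict $\varphi(\mathbf{s}')=y_2$. The same reasoning is applied to words of $\mathbf{t}$ that are not $y_2$, to check that they stay in $T$.

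The delicate points are setting up $G$ precisely and proving it bipartite from the variable bound; that is where $n$ enters, and it is the step I expect to be hardest. Once it is done, the invariant gives $y_2\notin\mathbf{u}^{(n)}+y_2$, a contradiction, so $\mathcal{V}$ is nonfinitely based.
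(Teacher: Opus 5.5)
There is a genuine gap. Your reduction is fine: every step either adds $\mathbf{p}\varphi(\mathbf{s}')\mathbf{q}$ to a term containing $\mathbf{p}\varphi(\mathbf{s})\mathbf{q}$, or deletes words. So it suffices to find a set $T\supseteq\mathbf{u}^{(n)}$ of words with $y_2\notin T$ such that $\mathbf{p}\varphi(\mathbf{s})\mathbf{q}\subseteq T$ always implies $\mathbf{p}\varphi(\mathbf{s}')\mathbf{q}\subseteq T$. The problem is that the $T$ you propose does not have this property. The step you call the main obstacle is false, not merely delicate.

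\emph{Why your $T$ fails.} The single letter $x_1$ satisfies $x_1\preceq\mathbf{u}^{(n)}$ in $S_2$, $S_7$ and $S_{53}$ for exactly the reason $y_2$ does. So any extra condition aimed only at $y_2$, such as your first one, leaves all the letters $x_i$ in $T$. Your second condition removes $y_1\in\mathbf{u}^{(n)}$ and fails at the start. Now consider the four-variable inequality
\[
z\preceq h+h'+hh'+w+wz .
\]
It holds in $S_2$, since $c(L_1)\cap c(L_2)\neq\emptyset$. It holds in $S_7$, since its right side has $\delta=\emptyset$: $h,h'\in L_1$ are joined by the word $hh'$. It holds in $S_{53}$, since the left side is a single letter. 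Hence it holds in $\mathsf{V}(S_2,S_7,S_{53})$, which the theorem covers; a direct check shows it also holds in $S_{(4,124)}$. Under $h\mapsto x_1$, $h'\mapsto x_2$, $w\mapsto y_1$, $z\mapsto y_2$, the right side becomes $x_1+x_2+x_1x_2+y_1+y_1y_2\subseteq T$, and one step produces $y_2$.

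This is also where your graph argument breaks. Once letters of the odd cycle are admitted as words, two variables of $L_1(\mathbf{s})$ can be adjacent. Then the hypothesis of Lemma~\ref{graph01} fails and $S_7$ gives no information. Two further errors:
\begin{itemize}
\item The set you feed to Lemma~\ref{graph01}, namely all variables whose images involve $y_1$ or $y_2$, is wrong: a $y_1$-variable and a $y_2$-variable are typically adjacent.
\item The $S_7$ evaluation must send $Z$ to $1$ and everything else to $a$. With $Y\mapsto 1$, a letter of $L_1(\mathbf{s})$ lying in $Y$ makes $\psi(\mathbf{s})=0$, and you get no contradiction.
\end{itemize}

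\emph{The missing idea} is that the invariant must be tight. $T$ should be the words of $\mathbf{u}^{(n)}$ up to the order of letters. The paper adds $xy\approx yx$ and works with commutative terms, which costs nothing because non-derivability from a larger set is stronger. Noncommutatively the reversed edges $x_{i+1}x_i$ cannot be avoided anyway, since $ba\preceq ab$ holds in $\mathsf{V}(S_2,S_7,S_{53})$. So you must prove that $\mathbf{u}^{(n)}$ is isolated: $\mathbf{p}\varphi(\mathbf{s})\mathbf{q}\subseteq\mathbf{u}^{(n)}$ forces $\varphi(\mathbf{s}')\subseteq\varphi(\mathbf{s})$. Only then is the structure rigid:
\begin{itemize}
\item the words of $\mathbf{s}$ are linear of length at most $2$;
\item $S_{53}$ forces $L_2(\mathbf{s})\neq\emptyset$ for a nontrivial step, so $\mathbf{p},\mathbf{q}$ are empty;
\item the graph of $L_2(\mathbf{s})$ has no odd cycle, because it has fewer than $2n+1$ vertices while the only odd cycle of $\mathbf{u}^{(n)}$ has length $2n+1$;
\item $y_1$ is the only length-one word of $\mathbf{u}^{(n)}$, so every variable of $c(L_1(\mathbf{s}))\cap c(L_2(\mathbf{s}))$ is sent to $y_1$, and $\varphi$ alternates $y_1,y_2$ along paths; hence no two such variables are joined by an odd path.
\end{itemize}
The case analysis then closes. $S_{53}$ puts the length-$2$ words of $\mathbf{s}'$ into $\mathbf{s}$ and excludes longer ones, which would give a triangle. $S_2$ handles letters of $\mathbf{s}'$ when $c(L_1(\mathbf{s}))\cap c(L_2(\mathbf{s}))=\emptyset$. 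Otherwise $S_7$ is used twice: first with the bipartition from Lemma~\ref{graph01}, then on connected components. Together they place any remaining letter of $\mathbf{s}'$ at even distance from a variable sent to $y_1$, so that letter is also sent to $y_1\in\varphi(\mathbf{s})$.
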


\begin{proof}
Let $\mathcal{V}$ be a subvariety of $\mathcal{W}$ that contains $S_2$, $S_7$, and $S_{53}$,
and let $\mathsf{Id}(\mathcal{V})$ denote the set of all identities
over $X$ that hold in $\mathcal{V}$.
Then $\mathcal{V}$ satisfies the inequality $\bq^{(n)} \preceq \bu^{(n)}$ for all $n \geq 1$.
The strategy of the proof is as follows.
We shall show that for every $n \geq 1$, the set $\Sigma_n$
of all $n$-variable identities in $\mathsf{Id}(\mathcal{V})$ does not form a basis for $\mathsf{Id}(\mathcal{V})$.
To this end, it suffices to prove that for every $n \geq 1$,
the inequality $\bq^{(n)} \preceq \bu^{(n)}$ can not be derived by $\Sigma_n$.
In fact, we shall show a stronger result:
$\bq^{(n)} \preceq \bu^{(n)}$ is not derivable from $\Sigma_n\cup \{xy\approx yx\}$.
Since commutativity is assumed, we may work entirely within the framework of commutative ai-semiring terms;
that is, the formal sum of words in the free commutative semigroup over $X$.

Suppose for contradiction that $\bq^{(n)} \preceq \bu^{(n)}$ is derivable from $\Sigma_n\cup \{xy\approx yx\}$.
Then by Lemma \ref{nlemma1} there exists a nontrivial identity $\bs_1 \approx \bs'_1$ in $\Sigma_n$
and a substitution $\varphi$ such that $\varphi(\bs_1) \neq \varphi(\bs'_1)$,
and $\varphi(\bs_1)$ is a subterm of $\bu^{(n)}$.
Then
\begin{equation}\label{26042301}
\bp_1\varphi(\bs_1)+\br_1 = \bu^{(n)}
\end{equation}
for some terms $\bp_1$ and $\br_1$,
where $\bp_1$ may be the set $\{1\}$, and $\br_1$ may be the empty set.
From this, we will ultimately derive that $\varphi(\bs_1) = \varphi(\bs'_1)$,
contradicting the earlier conclusion $\varphi(\bs_1) \neq \varphi(\bs'_1)$.

Firstly, it is easy to see that $\bs_1$ satisfies the following conditions:
\begin{itemize}
\item[$(a)$] $\ell(\bw)\leq 2$ for all $\bw \in \bs_1$;

\item[$(b)$] $\bw$ is linear for all $\bw \in \bs_1$;

\item[$(c)$] In the subset representation, $\bs_1$ can not contain
\[
\{x_1x_2, x_2x_3, \ldots, x_{2m}x_{2m+1}, x_{2m+1}x_1\}
\]
for any $m\geq 1$.
\end{itemize}

By condition $(a)$ we have that $\bs_1$ is the disjoint union of $L_1(\bs_1)$ and $L_2(\bs_1)$.
Since $\mathcal{V}$ satisfies $\bs_1 \approx \bs'_1$ and contains $S_{53}$,
it follows that $S_{53}$ also satisfies $\bs_1 \approx \bs'_1$.
Applying Lemma~\ref{lem5301}, one sees that $L_2(\mathbf{s}_1)$ cannot be empty;
otherwise $\mathbf{s}_1 \approx \mathbf{s}_1'$ would be trivial,
contradicting the fact that it is a nontrivial identity.
Combining $L_2(\mathbf{s}_1)\neq \emptyset$ with \eqref{26042301},
we obtain that $\bp_1$ must be the empty word,
and so
\begin{equation}\label{26042302}
\varphi(\bs_1)+\br_1 = \bu^{(n)}.
\end{equation}
Now we may regard $L_2(\bs_1)$ as a graph with vertex set $c(L_2(\bs_1))$
and edge set $\big\{ \{x, y\} \mid xy \in L_2(\bs_1) \big\}$.

\begin{claim}\label{claim02}
If $x, y\in c(L_1(\bs_1)) \cap c(L_2(\bs_1))$, then there is no path of odd length connecting $x$ and $y$.
\end{claim}

\begin{proof}[Proof of Claim $\ref{claim02}$]
Suppose, to the contrary, that such a path exists. Then there are variables $x_1, x_2, \dots, x_{2k} \in X$ ($k \geq 0$) such that
\[
x,\; xx_1,\; x_1x_2,\; \dots,\; x_{2k}y,\; y
\]
all lie in $\mathbf{s}_1$. The case $k = 0$ corresponds to the direct edge $xy \in \mathbf{s}_1$.
From \eqref{26042302}, we know that $\varphi(z) = y_1$ for every variable $z \in c(L_1(\mathbf{s}_1)) \cap c(L_2(\mathbf{s}_1))$. Since $x$ is such a variable, we have $\varphi(x) = y_1$.

Now traverse the path step by step.
Starting from $\varphi(x) = y_1$, using \eqref{26042302} we obtain $\varphi(x_1) = y_2$,
then $\varphi(x_2) = y_1$, and so on.
Repeating this process, after an odd number of steps we conclude that $\varphi(y) = y_2$.

On the other hand, since $y$ itself is also in $c(L_1(\mathbf{s}_1)) \cap c(L_2(\mathbf{s}_1))$,
it follows from \eqref{26042302} that $\varphi(y) = y_1$.
Thus $\varphi(y) = y_1 = y_2$, which is impossible.
This contradiction shows that no such odd path can exist, thereby establishing Claim~\ref{claim02}.
\end{proof}

\begin{claim}\label{claim01}
$L_2(\bs'_1) \subseteq \bs_1$.
\end{claim}

\begin{proof}[Proof of Claim $\ref{claim01}$]
Suppose that $\mathbf{p}$ is a word in $\bs'_1$ that does not belong to $\mathbf{s}_1$.
Then the nontrivial inequality $\mathbf{p} \preceq \mathbf{s}_1$ holds in $\mathcal{V}$ and also holds in $S_{53}$.
By Lemma~\ref{lem5301} and condition (a),
there exists $\mathbf{t} \in L_2(\mathbf{s}_1)$ such that $c(\mathbf{t}) \subseteq c(\mathbf{p})$.
Together with condition (b),
this yields $\mathbf{p} = \mathbf{t}$, and hence $\mathbf{p} \in \mathbf{s}_1$, a contradiction.
This completes the proof.
\end{proof}

With the above preparations in place, we now complete the proof of the theorem by distinguishing two cases.

\textbf{Case 1.} $c(L_1(\bs_1)) \cap c(L_2(\bs_1))=\emptyset$.

Let $\bp$ be an arbitrary word in $\bs'_1$.
Then the inequality $\bp \preceq \bs_1$ is satisfied by $\mathcal{V}$, and therefore also holds in $S_2$.
By Lemma~\ref{lem201} and condition (a), we have $\ell(\bp) \leq 2$.
If $\ell(\bp)=1$, then by Lemma~\ref{lem201}, the inequality $\bp \preceq \bs_1$ is trivial, and hence $\bp \in \bs_1$.
If $\ell(\bp)=2$, then by Claim~\ref{claim01}, $\bp \in \bs_1$.
Therefore, $\bs'_1 \subseteq \bs_1$.

We now use $\bs'_1 \subseteq \bs_1$ to prove the reverse containment $\bs_1 \subseteq \bs'_1$.
Let $\bq$ be an arbitrary word in $\bs_1$.
Then the inequality $\bq \preceq \bs'_1$ holds in $\mathcal{V}$, and hence is satisfied by $S_2$.
Since $\bs'_1 \subseteq \bs_1$, it follows from conditions $(a)$ and $(b)$ that
every word $\bw$ in $\bs'_1$ is linear with $\ell(\bw) \leq 2$,
and $c(L_1(\bs'_1)) \cap c(L_2(\bs'_1)) = \emptyset$.
One can also show that $L_2(\mathbf{s}_1) \subseteq \mathbf{s}'_1$; the proof is similar to that of Claim~\ref{claim01}.
The remainder of the proof follows the same line of reasoning as the proof of $\mathbf{s}_1' \subseteq \mathbf{s}_1$;
we omit the details.
Consequently, $\bq \in \bs'_1$, which yields $\bs_1 \subseteq \bs'_1$.
Thus $\bs_1 = \bs'_1$ and so $\varphi(\bs_1) = \varphi(\bs'_1)$, a contradiction.

\textbf{Case 2.} $c(L_1(\bs_1))\cap c(L_2(\bs_1)) \neq \emptyset$.

We first prove that $\varphi(\bs'_1)\subseteq\varphi(\bs_1)$. Indeed,
let $\bp$ be an arbitrary word in $\bs'_1$.
Then the inequality $\bp \preceq \bs_1$ holds in $\mathcal{V}$, and is also satisfied by $S_7$ and $S_{53}$.
By Lemma~\ref{lem5301}, for any $\bw \in S_2(\bp)$,
there exists $\bt \in L_2(\bs_1)$ such that $c(\bt) \subseteq c(\bw)$, and so $\bw=\bt$.
This implies that every subword of $\mathbf{p}$ of length $2$ is linear,
and hence $\mathbf{p}$ itself is linear.

We now show that $\ell(\bp) \leq 2$.
Indeed, suppose that $\ell(\bp) \geq 3$. We may write $\bp=y_1y_2\cdots y_m$ with $m\geq3$.
By Lemma~\ref{lem5301},
the three words $y_1y_2$, $y_2y_3$, and $y_3y_1$ all belong to $\bs_1$, contradicting condition $(c)$.
Thus $\ell(\bp) \leq 2$.

If $\ell(\bp)=2$, then by Claim~\ref{claim01} $\bp \in \bs_1$, and so $\varphi(\bp) \in \varphi(\bs_1)$.
Now suppose that $\ell(\bp)=1$. Then $\bp$ itself is a variable, and
Lemma~\ref{lem5301} implies that $\bp\in c(\bs_1)$.
Recall that $\bs_1$ is the disjoint union of $L_1(\bs_1)$ and $L_2(\bs_1)$.
If $\bp$ occurs in $L_1(\bs_1)$, then $\bp \in \bs_1$, and so $\varphi(\bp) \in \varphi(\bs_1)$;
otherwise, there exists $\bt \in L_2(\bs_1)$ such that $\bt=\bp\bp'$ for some $\bp'\in X$.

From condition (c), the graph associated with $L_2(\mathbf{s}_1)$ contains no odd cycle, and is therefore bipartite.
By Claim~\ref{claim02},
no two distinct vertices in $c(L_1(\mathbf{s}_1)) \cap c(L_2(\mathbf{s}_1))$ are connected by an odd-length path in this graph.
Applying Lemma~\ref{graph01}, there exists a bipartition $(Y, Z)$ of the graph $L_2(\mathbf{s}_1)$ such that
$c(L_1(\mathbf{s}_1)) \cap c(L_2(\mathbf{s}_1)) \subseteq Y$.

If $\bp\in Z$, let $\psi: X \to S_7$ be a substitution
defined by $\psi(t)=1$ if $t\in Z$, and $\psi(t)=a$ otherwise.
It is easy to see that $\psi(\bp)=1$ and $\psi(\bs_1)=a$,
and hence $\bp\preceq \bs_1$ does not hold in $S_7$, a contradiction.
Therefore, $\bp\in Y$.

Let $\bp^*$ denote the connected component of $\bp$ in the graph associated with $L_2(\bs_1)$.
We shall show that there exists $x\in c(L_1(\bs_1)) \cap c(L_2(\bs_1))$ such that $x\in \bp^*$.
Suppose that it is not true.
Consider the substitution $\alpha: X \to S_7$
defined by
\[
\alpha(t) =
\begin{cases}
1 & \text{if }  t \in Y \cap \mathbf{p}^* \text{ or } t \in Z \setminus \mathbf{p}^*, \\
a & \text{otherwise}.
\end{cases}
\]
A detailed verification yields $\alpha(\bp)= 1$ and $\alpha(\bs_1)=a$, a contradiction.
Hence such an $x$ exists.
So we may assume that $\bs_1$ contains $x, xx_1, x_1x_2,\dots, x_{2k+1}\bp$ for some $x_1, x_2,\dots, x_{2k+1}\in X$.
By \eqref{26042302}, $\varphi(x)=y_1$, and so $\varphi(\bp)=y_1$.
Consequently, $\varphi(\bp)=\varphi(x)$.
Hence $\varphi(\bp) \in \varphi(\bs_1)$ and so $\varphi(\bs'_1)\subseteq\varphi(\bs_1)$.

We now use $\varphi(\bs'_1)\subseteq \varphi(\bs_1)$ to prove the reverse inclusion $\varphi(\bs_1)\subseteq\varphi(\bs'_1)$.
Let $\bq$ be an arbitrary word in $\varphi(\bs_1)$.
Then $\bq \preceq \varphi(\bs'_1)$ holds in $\mathcal{V}$, and hence is satisfied by $S_2$, $S_7$ and $S_{53}$.
By \eqref{26042302}, $\varphi(\bs_1)\subseteq \bu^{(n)}$. Since $\varphi(\bs'_1)\subseteq\varphi(\bs_1)$,
it follows immediately that $\varphi(\bs'_1) \subseteq \bu^{(n)}$.
This implies that $\ell(\bw)\leq2$ and $\bw$ is linear for all $\bw\in \varphi(\bs'_1)$.
One can also show that $L_2(\varphi(\mathbf{s}_1)) \subseteq \varphi(\mathbf{s}'_1)$;
the proof is similar to that of Claim~\ref{claim01}.

If the graph associated with $L_2(\varphi(\bs'_1))$ does not contain any odd cycle,
then the remainder of the argument follows the same pattern as in Case~1 or Case~2,
depending on whether the intersection $c(L_1(\varphi(\mathbf{s}_1'))) \cap c(L_2(\varphi(\mathbf{s}'_1)))$ is empty or not;
we omit the details.

Now suppose that the graph associated with $L_2(\varphi(\bs'_1))$ contains an odd cycle.
Since $\varphi(\bs'_1) \subseteq \bu^{(n)}$,
we deduce that $\varphi(\bs'_1)$ contains
$x_1x_2+x_2x_3+\cdots+x_{2n+1}x_1$.
Consequently, $\varphi(\bs'_1)$ is either equal to $\mathbf{u}^{(n)}$ or one of the following forms:
\[
x_1x_2 + x_2x_3 + \cdots + x_{2n+1}x_1,
\]
\[
x_1x_2 + x_2x_3 + \cdots + x_{2n+1}x_1 + y_1,
\]
\[
x_1x_2 + x_2x_3 + \cdots + x_{2n+1}x_1 + y_1y_2.
\]

If $\varphi(\bs'_1)=\bu^{(n)}$, then $\varphi(\bs_1)\subseteq\varphi(\bs'_1)$ follows immediately;
otherwise, it is easy to see that $L_1(\varphi(\bs'_1))\cap c(L_2(\varphi(\bs'_1)))=\emptyset$.
Since $\bq \preceq \varphi(\bs'_1)$ holds in $S_2$,
we have by Lemma~\ref{lem201} that $\ell(\bq)\leq 2$.
If $\ell(\bq)=1$, then $\bq \preceq \varphi(\bs'_1)$ is trivial, and hence $\bq\in\varphi(\bs'_1)$.
If $\ell(\bq)=2$, then $\bq\in\varphi(\bs'_1)$, since $L_2(\varphi(\mathbf{s}_1)) \subseteq \varphi(\mathbf{s}'_1)$.
We now conclude that $\varphi(\bs_1)\subseteq\varphi(\bs'_1)$.
Thus $\varphi(\bs_1)=\varphi(\bs'_1)$, a contradiction.
This completes the proof.
\end{proof}

In the remainder we apply Theorem \ref{thm1} to prove that several ai-semirings are nonfinitely based.

\begin{cor}\label{cor124}
The ai-semiring $S_{(4,124)}$ is nonfinitely based.
\end{cor}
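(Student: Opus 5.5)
By Theorem~\ref{thm1}, it suffices to show two things. First, $\mathsf{V}(S_{(4,124)})\subseteq\mathcal{W}$, i.e.\ $S_{(4,124)}$ satisfies $\mathbf{q}^{(n)}\preceq\mathbf{u}^{(n)}$ for every $n\geq 1$. Second, $S_2$, $S_7$ and $S_{53}$ all lie in $\mathsf{V}(S_{(4,124)})$. I will verify these separately, working directly from Table~\ref{tb124}.

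\textbf{Inclusion in $\mathcal{W}$.} The order is $3<2<1$ and $4<1$, with $1$ the top element, $2,4$ coatoms and $3,4$ minimal. Note that $3$ is a multiplicative identity, $1$ is absorbing for both operations, and $4^2=2$, $2^2=1$, $2\cdot 4=1$. Fix an evaluation $\varphi$ into $S_{(4,124)}$ and put $b=\varphi(\mathbf{u}^{(n)})$; we must show $\varphi(y_2)\leq b$.
\begin{itemize}
\item If $b=1$ there is nothing to prove.
\item If $b=4$, every summand equals $4$. In particular $\varphi(y_1)=4$, so $\varphi(y_1y_2)=4$ forces $\varphi(y_2)=3$. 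But then $\varphi(y_1y_2)=4\cdot 3=4$ and $\varphi(y_2y_1)=4$, which is consistent. So this case needs the odd cycle.
\item If $b\in\{2,3\}$, every summand lies in $\{2,3\}$. Then $\varphi(y_1)\in\{2,3\}$, and $\varphi(y_1y_2)\leq 2$ gives $\varphi(y_2)\in\{2,3\}$, since $\varphi(y_2)\in\{1,4\}$ would make the product $1$ or $4$. Hence $\varphi(y_2)\leq 2$. If moreover $b=3$, then all summands equal $3$, so $\varphi(y_1)=3$ and $\varphi(y_1y_2)=3$, giving $\varphi(y_2)=3$. In both subcases $\varphi(y_2)\leq b$.
\end{itemize}
Now return to $b=4$. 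Each edge $x_ix_{i+1}$ of the cycle evaluates to $4$, and the only pairs with product $4$ are $\{3,4\}$. Thus the values $\varphi(x_i)$ alternate between $3$ and $4$ around a cycle of odd length $2n+1$, which is impossible. This odd-cycle parity argument is the heart of the inclusion, and it is exactly why $\mathbf{u}^{(n)}$ contains an odd cycle. Hence $S_{(4,124)}\in\mathcal{W}$.

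\textbf{Containment of $S_2$, $S_7$, $S_{53}$.} I will realize each as a subalgebra, a quotient, or a subalgebra of a quotient (or of a small power) of $S_{(4,124)}$.
\begin{itemize}
\item For $S_2$: the set $\{1,2,4\}$ is closed under both operations. Its sums are $2+4=1$ (and idempotents), and its products are $4\cdot4=2$, $2\cdot2=1$, $2\cdot4=1$. The map $1\mapsto 1$, $2\mapsto 2$, $4\mapsto 3$ is then an isomorphism onto $S_2$, comparing with Table~\ref{tbS2}.
\item For $S_7$: $S_7$ has an absorbing top $0$, an identity $1$ and a nilpotent element $a$ with $a^2=0$, where $a$ and $1$ are incomparable minimal elements. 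The natural candidate is $\{1,3,4\}$ modulo collapsing $2$ into $1$. Concretely, I would check that the partition $\{\{1,2\},\{3\},\{4\}\}$ is a congruence. This requires checking that $2$ and $1$ behave alike modulo the block: $2+3=2\sim1=1+3$, $2+4=1$, $2\cdot3=2\sim1$, and $2\cdot4=1$. The quotient then has elements $\bar1$ (absorbing top), $\bar3$ (identity) and $\bar4$ with $\bar4^2=\bar2=\bar1$, and $\bar3+\bar4=\bar1$. This is $S_7$ via $\bar1\mapsto0$, $\bar4\mapsto a$, $\bar3\mapsto1$.
\item For $S_{53}$: the Cayley table of $S_{53}$ shows an identity $2$ below $1$, an absorbing top $3$, and $1^2=3$. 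Here $1$ and $2$ are comparable, with $1+2=1$. So I would take the subalgebra $\{1,2,3\}$ of $S_{(4,124)}$, where $3<2<1$, $3$ is the identity and $2^2=1$. Matching $1\mapsto3$, $2\mapsto1$, $3\mapsto2$ (into $S_{53}$'s labels), the sums and products agree with Table~\ref{tb53}.
\end{itemize}

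With both conditions established, Theorem~\ref{thm1} gives that $\mathsf{V}(S_{(4,124)})$ is nonfinitely based. The only substantive step is the case $b=4$ in the inclusion, handled by the odd-cycle parity argument; the remaining checks are finite table verifications.
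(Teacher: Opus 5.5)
Your proposal is correct and follows the paper's proof essentially step for step: it uses the same subalgebras $\{1,2,4\}\cong S_2$ and $\{1,2,3\}\cong S_{53}$, the same quotient by the congruence with the single nontrivial block $\{1,2\}$ yielding $S_7$, and the same case analysis on $\varphi(\mathbf{u}^{(n)})\in\{1,2,3,4\}$, where the odd cycle rules out the value $4$ because only the pair $\{3,4\}$ multiplies to $4$. Your explicit table checks fill in what the paper calls straightforward, and they are accurate; for instance, $\{1,2\}$ is an ideal for both operations and hence a congruence.
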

\begin{proof}
It is straightforward to verify that $S_{(4,124)}$ contains a copy of $S_2$ and $S_{53}$:
the subset $\{1,2,4\}$ is isomorphic to $S_2$, and $\{1,2,3\}$ is isomorphic to $S_{53}$.
Hence $\mathsf{V}(S_{(4,124)})$ contains both $S_2$ and $S_{53}$.
Moreover, $S_7$ is isomorphic to the quotient algebra $S_{(4, 124)}/\rho$,
where $\{1, 2\}$ is the only nontrivial block of $\rho$.
Consequently, $\mathsf{V}(S_{(4, 124)})$ also contains $S_7$.
Thus, it suffices to show that $\mathsf{V}(S_{(4, 124)})$ is a subvariety of $\mathcal{W}$,
i.e., that $S_{(4, 124)}$ satisfies the inequalities $\bq^{(n)} \preceq \bu^{(n)}$ for all $n\geq1$.

Observe that $\varphi(\mathbf{u}^{(n)})$ cannot be $4$.
Indeed, if $\varphi(\mathbf{u}^{(n)}) = 4$,
then
\[
\varphi(x_1)\varphi(x_2)=\varphi(x_2)\varphi(x_3)=\cdots=\varphi(x_{2n})\varphi(x_{2n+1})=\varphi(x_{2n+1})\varphi(x_1)=4.
\]
This implies that $\varphi(x_1)=3=4$, a contradiction.
Hence $\varphi(\mathbf{u}^{(n)}) \in \{1, 2, 3\}$.

If $\varphi(\bu^{(n)})=1$, then $\varphi(\bq^{(n)}) \leq \varphi(\bu^{(n)})$,
since $1$ is the additive top element of $S_{(4, 124)}$.
If $\varphi(\bu^{(n)})=2$, then either $\varphi(y_2)=2$ or $\varphi(y_2)=3$, and so $\varphi(\bq^{(n)}) \leq \varphi(\bu^{(n)})$.
If $\varphi(\bu^{(n)})=3$, then $\varphi(x_i)=\varphi(y_j)=3$ for all $1\leq i\leq 2n+1$ and $1\leq j\leq 2$,
which again yields $\varphi(\bq^{(n)}) \leq \varphi(\bu^{(n)})$.
Thus $S_{(4, 124)}$ satisfies $\bq^{(n)} \preceq \bu^{(n)}$,
and consequently $\mathsf{V}(S_{(4, 124)})$ is a subvariety of $\mathcal{W}$.

Therefore, Theorem~\ref{thm1} implies that $S_{(4, 124)}$ is nonfinitely based.
\end{proof}

Next, we prove that the 6-element ai-semiring $R_6$,
whose Cayley tables are given in Table~\ref{6-element ai-semirings}, is nonfinitely based.
We first establish the following result.

\begin{pro}\label{proR6}
$\mathsf{V}(R_6)=\mathsf{V}(S_2, S_7, S_{53})$,
where the latter denotes the variety generated by $S_2$, $S_7$, and $S_{53}$.
\end{pro}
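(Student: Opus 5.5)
We have to prove both inclusions without seeing the Cayley tables of $R_6$. They are in Table~\ref{6-element ai-semirings}, which appears after the statement. So the plan below is written in terms of the structural facts I expect to verify from that table.

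\emph{The inclusion $\mathsf{V}(S_2,S_7,S_{53})\subseteq\mathsf{V}(R_6)$.} I will exhibit each of $S_2$, $S_7$ and $S_{53}$ as a member of $\mathsf{HS}(R_6)$. Concretely, I will look for three-element subsets of $R_6$ closed under $+$ and $\cdot$ that are isomorphic to $S_2$ and to $S_{53}$. For $S_7$, I will look either for a subalgebra isomorphic to it, or, as in Corollary~\ref{cor124}, for a subalgebra whose quotient by a congruence with a single nontrivial block is isomorphic to $S_7$. This is finite checking against the tables.

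\emph{The inclusion $\mathsf{V}(R_6)\subseteq\mathsf{V}(S_2,S_7,S_{53})$.} The natural route is to embed $R_6$ as a subalgebra of a product of copies of $S_2$, $S_7$ and $S_{53}$. Equivalently, I will find a family of homomorphisms $h_i\colon R_6\to T_i$ with $T_i\in\{S_2,S_7,S_{53}\}$ that separate points. Since $|R_6|=6$, there are $15$ pairs of elements to separate, so only a few homomorphisms should be needed.

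I will build these from congruences of $R_6$. I will look for congruences $\theta$ with $R_6/\theta$ isomorphic to a subalgebra of one of the three generators; the two-element subalgebras of these, for instance the $\{0,1\}$ two-element lattice-like part of $S_7$, also count. I will then check that the intersection of the chosen congruences is the identity relation.

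\emph{Fallback.} If no such separating family exists, then $R_6$ is not a subdirect product of these algebras. In that case I would argue syntactically instead: every identity $\mathbf{u}\approx\mathbf{v}$ holding in $S_2$, $S_7$ and $S_{53}$ also holds in $R_6$. Reducing to inequalities $\mathbf{q}\preceq\mathbf{u}$ as in Section~2, I would combine the characterizations in Lemmas~\ref{lem201}, \ref{lemma21} and \ref{lem5301}. Then, for an arbitrary assignment into $R_6$, I would case-split on the value of $\varphi(\mathbf{u})$, in the spirit of the proof of Corollary~\ref{cor124}, and use the conditions from the three lemmas to force $\varphi(\mathbf{q})\le\varphi(\mathbf{u})$.

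The main obstacle is this second inclusion. In the subdirect approach the hard part is finding enough homomorphisms onto subalgebras of $S_2$, $S_7$ and $S_{53}$. In the syntactic fallback the hard part is the case analysis for the values of $\varphi(\mathbf{u})$ where the three conditions must interact, notably the $\delta$-condition from $S_7$ combined with the length-two subword condition from $S_{53}$. I would try the subdirect decomposition first.
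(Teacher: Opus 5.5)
\textbf{Assessment.} Your main route is the paper's route: realize $R_6$ as a subdirect product of $S_2$, $S_7$, $S_{53}$, and get the reverse inclusion from closure under $\mathsf{H}$ and $\mathsf{S}$. It does succeed, so the syntactic fallback is never needed. As written, however, the proposal is a search plan rather than a proof. It never exhibits the congruences, which are the only real content here, and the fallback leaves open whether they exist.

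\textbf{The missing data.} On $R_6$ take:
\begin{itemize}
\item $\theta_1$ with single nontrivial block $\{1,2,3,4\}$; then $R_6/\theta_1\cong S_2$ via $[1]\mapsto 1$, $5\mapsto 2$, $6\mapsto 3$;
\item $\theta_2$ with single nontrivial block $\{1,2,5,6\}$; then $R_6/\theta_2\cong S_7$ via $[1]\mapsto 0$, $3\mapsto 1$, $4\mapsto a$;
\item $\theta_3$ with nontrivial blocks $\{1,4,6\}$ and $\{2,5\}$; then $R_6/\theta_3\cong S_{53}$ via $[1]\mapsto 1$, $[2]\mapsto 3$, $3\mapsto 2$.
\end{itemize}
The only nontrivial block of $\theta_1\cap\theta_2$ is $\{1,2\}$, and $\theta_3$ separates $1$ from $2$. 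Hence $\theta_1\cap\theta_2\cap\theta_3$ is the equality relation.

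The paper packages the same congruences in two steps. First, $\theta_1$ together with the congruence with blocks $\{1,6\},\{2,5\}$ exhibits $R_6$ as a subdirect product of $S_2$ and $S_{(4,359)}$. Second, $S_{(4,359)}$ is a subdirect product of $S_7$ and $S_{53}$ via the blocks $\{1,2\}$ and $\{1,4\}$. Your $\theta_2$ and $\theta_3$ are exactly the pullbacks of these last two congruences to $R_6$.

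\textbf{The reverse inclusion.} Each of these quotients is all of $S_2$, $S_7$, $S_{53}$, not a proper subalgebra. So the three algebras already lie in $\mathsf{H}(R_6)$, which gives $\mathsf{V}(S_2,S_7,S_{53})\subseteq\mathsf{V}(R_6)$ with no separate search. This is how the paper obtains both inclusions at once.

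If you do run your separate $\mathsf{HS}$ search, note the following.
\begin{itemize}
\item $\{2,5,6\}\cong S_2$ and $\{1,2,3\}\cong S_{53}$ are subalgebras of $R_6$.
\item $S_7$ is \emph{not} a subalgebra of $R_6$. The only multiplicative idempotents are $2$ and $3$, which forces $2$ to be the zero and $3$ the identity. But no $a\neq 2$ satisfies $3a=a$, $a^2=2$ and $3+a=2$.
\end{itemize}
So for $S_7$ your first option fails, and you need your second one: the subalgebra $\{1,2,3,4\}$, which is a copy of $S_{(4,359)}$, taken modulo the block $\{1,2\}$.
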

\begin{proof}
Let $S_{(4, 359)}$ denote the $4$-element ai-semiring with Cayley tables shown in Table~\ref{tb359}.
It is straightforward to check that $R_6$ is isomorphic to a subdirect product of $S_2$ and $S_{(4, 359)}$
via the congruences defined by the nontrivial blocks $\{1, 2, 3, 4\}$ and $\{\{1, 6\}, \{2, 5\}\}$,
and that $S_{(4, 359)}$ is isomorphic to a subdirect product of $S_7$ and $S_{53}$
via the congruences defined by the nontrivial blocks $\{1, 2\}$ and $\{1, 4\}$.
Therefore,
\[
\mathsf{V}(R_6)=\mathsf{V}(S_2, S_7, S_{53}).
\]
This completes the proof.
\end{proof}

\begin{table}[htbp]
\caption{The Cayley tables of $R_6$}\label{6-element ai-semirings}
\begin{tabular}{c|cccccc}
$+$               &1 & 2 & 3 & 4 & 5 & 6\\
\hline
                 1& 1 & 2 & 1 & 1 & 2 & 1 \\
                 2&2 & 2 & 2 &  2 & 2 & 2\\
                 3&1 & 2 & 3 & 1 & 2 & 1\\
                 4&1 & 2 & 1 & 4 & 2 & 1\\
                 5&2 & 2 & 2 & 2 & 5 & 2\\
                 6&1 & 2 & 1 & 1 & 2 & 6
\end{tabular}
\qquad\qquad
\begin{tabular}{c|cccccc}
$\cdot$               &1 & 2 & 3 & 4 & 5 & 6\\
\hline
                   1&2 & 2 & 1 & 2 & 2 & 2 \\
                   2&2 & 2 & 2 & 2 & 2 & 2\\
                   3&1 & 2 & 3 & 4 & 2 & 1\\
                   4&2 & 2 & 4 & 2 & 2 & 2\\
                   5&2 & 2 & 2 & 2 & 2 & 2\\
                   6&2 & 2 & 1 & 2 & 2 & 5
\end{tabular}
\end{table}

\begin{table}[ht]
\caption{The Cayley tables of $S_{(4, 359)}$} \label{tb359}
\begin{tabular}{c|cccc}
$+$      &$1$&$2$&$3$&$4$\\
\hline
$1$      &$1$&$2$&$1$&$1$\\
$2$      &$2$&$2$&$2$&$2$\\
$3$      &$1$&$2$&$3$&$1$\\
$4$      &$1$&$2$&$1$&$4$\\
\end{tabular}\qquad
\begin{tabular}{c|cccc}
$\cdot$      &$1$&$2$&$3$&$4$\\
\hline
$1$      &$2$&$2$&$1$&$2$\\
$2$      &$2$&$2$&$2$&$2$\\
$3$      &$1$&$2$&$3$&$4$\\
$4$      &$2$&$2$&$4$&$2$\\
\end{tabular}
\end{table}

\begin{cor}
The ai-semiring $R_6$ is nonfinitely based.
\end{cor}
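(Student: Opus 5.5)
By Proposition~\ref{proR6}, $\mathsf{V}(R_6)=\mathsf{V}(S_2,S_7,S_{53})$. This variety contains $S_2$, $S_7$ and $S_{53}$, so Theorem~\ref{thm1} applies once we know that $\mathsf{V}(R_6)$ is a subvariety of $\mathcal{W}$. That containment is the only thing left to prove, and it needs no new computation.

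The containment is checked on generators. Earlier in this section it was shown, using Lemmas~\ref{lem201}, \ref{lemma21} and \ref{lem5301}, that each of $S_2$, $S_7$ and $S_{53}$ satisfies every inequality $\mathbf{q}^{(n)}\preceq\mathbf{u}^{(n)}$ with $n\geq 1$. Hence all three belong to $\mathcal{W}$. Since $\mathcal{W}$ is a variety, it contains the variety they generate, so $\mathsf{V}(S_2,S_7,S_{53})\subseteq\mathcal{W}$. By Proposition~\ref{proR6}, this gives $\mathsf{V}(R_6)\subseteq\mathcal{W}$.

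Applying Theorem~\ref{thm1} with $\mathcal{V}=\mathsf{V}(R_6)$ now shows that $\mathsf{V}(R_6)$ is nonfinitely based. By definition, this means $R_6$ is nonfinitely based.

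There is no real obstacle here: all the substantive work was done in Theorem~\ref{thm1}, in Proposition~\ref{proR6}, and in the earlier verification that the three small semirings lie in $\mathcal{W}$. The only point to state explicitly is that membership in $\mathcal{W}$ passes to the generated variety, because $\mathcal{W}$ is equationally defined.
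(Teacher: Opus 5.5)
Your proposal is correct and matches the paper's proof, which simply cites Theorem~\ref{thm1} together with Proposition~\ref{proR6}. You also make explicit the step the paper leaves implicit: $\mathsf{V}(R_6)=\mathsf{V}(S_2,S_7,S_{53})\subseteq\mathcal{W}$, because each of the three generators was already shown to satisfy every inequality $\mathbf{q}^{(n)}\preceq\mathbf{u}^{(n)}$ and $\mathcal{W}$ is equationally defined.
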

\begin{proof}
This follows from Theorem~\ref{thm1} and Proposition~\ref{proR6} immediately.
\end{proof}

Finally, one can use the approach in the proof of Corollary~\ref{cor124}
to show that for any $i$ in the set
\[
\{1, 2, \dots, 12, 14, 16, 20, 23, 26, 27, \dots,31, 37, 38, 53, 54, 56, 57, \dots,61\},
\]
the $3$-element ai-semiring $S_i$ (whose Cayley tables can be found in \cite{zrc})
satisfies $\bq^{(n)}\preceq \bu^{(n)}$ for all $n\geq1$.
Consequently, every $S_i$ lies in $\mathcal{W}$.
Let $\mathcal{W}_1$ denote the variety generated by all such $S_i$.
Then $\mathcal{W}_1$ is a subvariety of $\mathcal{W}$, and contains $S_2$, $S_7$, and $S_{53}$.
By Theorem~\ref{thm1}, we obtain the following.

\begin{cor}
Let $\mathcal{V}$ be an ai-semiring variety.
If $\mathcal{V}$ is a subvariety of $\mathcal{W}_1$ and contains $S_2$, $S_7$, and $S_{53}$,
then $\mathcal{V}$ is nonfinitely based.
In particular, if $\mathcal{K}$ is a class consisting of some of the above $S_i$ and contains $S_2$, $S_7$, and $S_{53}$, then the variety generated by $\mathcal{K}$ is nonfinitely based.
\end{cor}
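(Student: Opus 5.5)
The statement is a direct consequence of Theorem~\ref{thm1}, so the plan is to verify its two hypotheses for $\mathcal{V}$ and then apply it.

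First, $\mathcal{V}$ is a subvariety of $\mathcal{W}$. By assumption $\mathcal{V}\subseteq\mathcal{W}_1$. The variety $\mathcal{W}_1$ is generated by semirings $S_i$ each of which satisfies $\bq^{(n)}\preceq\bu^{(n)}$ for every $n\geq 1$. Satisfaction of an identity passes to the whole variety generated, since identities are preserved under $\mathsf{H}$, $\mathsf{S}$ and $\mathsf{P}$. Hence $\mathcal{W}_1\subseteq\mathcal{W}$, and so $\mathcal{V}\subseteq\mathcal{W}$.

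For each listed $S_i$, the check that it satisfies $\bq^{(n)}\preceq\bu^{(n)}$ follows the pattern of Corollary~\ref{cor124}. Take an assignment $\varphi$ into $S_i$ and argue by cases on the value of $\varphi(\bu^{(n)})$, showing in each case that $\varphi(y_2)\le\varphi(\bu^{(n)})$. Two features of $\bu^{(n)}$ drive this:
\begin{itemize}
\item the summand $y_1$ together with $y_1y_2+y_2y_1$ constrains $\varphi(y_2)$ relative to $\varphi(y_1)$;
\item the odd cycle $x_1x_2+\cdots+x_{2n+1}x_1$ rules out any value of $\varphi(\bu^{(n)})$ that would need a $2$-colouring of an odd cycle.
\end{itemize}
The paper asserts these checks as routine, so I would take them from the text. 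Verifying them for all the listed tables is the only laborious step, and the same case analysis applies in each.

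Second, $\mathcal{V}$ contains $S_2$, $S_7$ and $S_{53}$, which is a hypothesis of the corollary. Theorem~\ref{thm1} therefore applies and $\mathcal{V}$ is nonfinitely based.

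For the ``in particular'' clause, let $\mathcal{K}$ be a class of the listed $S_i$ containing $S_2$, $S_7$ and $S_{53}$. Then $\mathsf{V}(\mathcal{K})\subseteq\mathcal{W}_1$, because each member of $\mathcal{K}$ is among the generators of $\mathcal{W}_1$. Also $\mathsf{V}(\mathcal{K})$ contains $S_2$, $S_7$ and $S_{53}$, since they belong to $\mathcal{K}$. The first part then applies to $\mathsf{V}(\mathcal{K})$.
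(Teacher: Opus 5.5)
Your proposal is correct and follows the paper's own argument. Each listed $S_i$ satisfies $\bq^{(n)}\preceq\bu^{(n)}$, so $\mathcal{W}_1\subseteq\mathcal{W}$. Any $\mathcal{V}\subseteq\mathcal{W}_1$ containing $S_2$, $S_7$, $S_{53}$ then falls under Theorem~\ref{thm1}, and the ``in particular'' clause follows because $\mathsf{V}(\mathcal{K})\subseteq\mathcal{W}_1$. Like you, the paper only asserts the per-$S_i$ verification, saying it goes as in Corollary~\ref{cor124}, and gives no more detail than your sketch.
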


\section{Conclusion}
We have shown that the four-element ai-semiring $S_{(4,124)}$ is nonfinitely based,
and that the variety $\mathsf{V}(S_{(4,124)})$ contains $S_7$, thereby adding a new piece of evidence for Problem~\ref{prob0127}.
Consequently, among the $866$ four-element ai-semirings, only three remain whose finite basis problem is still unresolved.

Our proof relies on a new sufficient condition developed specifically for this algebra, as all previously known criteria fail for $S_{(4,124)}$.
Whether a general criterion exists that would settle Problem~\ref{prob0127} for all finite ai-semirings containing $S_7$ remains an open question.

We would like to emphasize that, combining our result with the results obtained in \cite{jrz, gmrz, rjzl, rlyc, rlzc, ryy, wrz, yrg, yrzs}, Problem~\ref{prob0127} is now confirmed to hold for all four-element ai-semirings; that is, every four-element ai-semiring whose variety contains $S_7$ is nonfinitely based.

The original motivation of this work also included an attempt to settle the finite basis problem for the variety generated by all three-element ai-semirings.
Although this goal has not been fully achieved, we are able to show that the variety generated by $32$ explicitly listed three-element ai-semirings is nonfinitely based.
The specific form of $\mathbf{u}^{(n)}$ was designed precisely to handle this larger class.
If one were only aiming to prove the nonfinite basis property for $S_{(4,124)}$, the two terms $y_1y_2$ and $y_2y_1$ could be simplified to just one of them, say $y_1y_2$ alone.
The presence of both terms is essential, however, for the verification that the $32$ semirings satisfy $\mathbf{q}^{(n)} \preceq \mathbf{u}^{(n)}$.
Thus, the current formulation of $\mathbf{u}^{(n)}$ is a deliberate technical choice to accommodate a broader family of examples,
even though the ultimate question concerning the full three-element ai-semiring variety remains open.

Consequently, while a full resolution of the finite basis problem for the variety generated by all three-element ai-semirings remains open, our results provide a robust partial answer and introduce technical tools that may prove useful in its ultimate settlement. Whether the variety generated by all three-element ai-semirings is indeed nonfinitely based, and whether the methods developed here can be extended to settle this question, are problems we leave for future investigation.

\qquad

\noindent
\textbf{Acknowledgements}
The authors would like to thank
Professor Marcel Jackson for valuable discussions on earlier attempts to resolve
the finite basis problem for $S_{(4,124)}$
and for sharing his insights into the difficulties involved.
We also thank Zidong Gao and Yanan Wu for helpful discussions and contributions to this work.

\qquad

\noindent
\textbf{Statements and Declarations}

\quad

\noindent
\textbf{Competing Interests}: The authors declare that they have no competing financial or non-financial interests that are directly or indirectly related to the work submitted for publication.

\quad

\noindent
\textbf{Funding}: Miaomiao Ren is supported by the National Natural Science Foundation of China (Grant Nos. 12371024, 12571020). 
Mengya Yue declares no funding was received for this research.

\bibliographystyle{amsplain}


\end{document}